\numberwithin{equation}{section}
\newtheorem{thm}{Theorem}
\newtheorem{prop}[thm]{Proposition}
\newtheorem{lemma}[thm]{Lemma}
\newtheorem{cor}[thm]{Corollary}
\newtheorem{definition}[thm]{Definition}
\theoremstyle{definition}
\newcommand{\N}{\mathbb{N}}
\newcommand{\R}{\mathbb{R}}
\newcommand{\Z}{\mathbb{Z}}
\numberwithin{thm}{section}
\title{Symmetric semi-algebraic sets and non-negativity of symmetric polynomials}
\author{Cordian  Riener}
\address{Aalto Science Institute\\
PO Box  11000\\
FI-00076 Aalto, Finland  }
\begin{document}

\begin{abstract}
The question of how to certify the non-negativity of a polynomial function lies at the heart of Real Algebra and  has important applications to optimization. 
Timofte\cite{timofte-2003} provided a useful way of certifying non-negativity of symmetric polynomials. 
In this note we  slightly generalize  Timofte's statement and investigate families of polynomials that allow special representations in terms of power-sum polynomials.
We also recover the consequences of Timofte's original statements as a corollary.
\end{abstract}
\maketitle
\section{Introduction}
Real Algebraic Geometry evolved around the question how to certify  that a polynomial function $f\in\R[X_1,\ldots,X_n]$ assumes only nonnegative values and  the study of so called semi-algebraic sets. These  subsets of $\R^n$ are defined by  a Boolean formula whose atoms are polynomial equalities and inequalities. Given  polynomials $f_1\ldots,f_m\in\R[X_1,\ldots,X_n]$ we will denote by $S(f_1,\ldots,f_m)\subset\R^n$ a semi-algebraic set, such that the equalities and inequalities appearing in the description  are given by  the polynomials $f_1,\ldots,f_m$. With this setup, problems such as deciding if  a semi-algebraic set is empty or not, or computing topological invariants of such sets are central to Real Algebraic Geometry \cite{BPR}.  These algorithmic questions also have applications to other areas of mathematics, for example to optimization. In recent years there has been some interest to this question in particularly structured situations, for example polynomials  invariant under the action of a group. In this note  we investigate  symmetric polynomials, i.e., polynomials invariant under all permutations of the variables. In this setting Timofte \cite{timofte-2003} introduced  the following so-called \emph{half-degree principle}: Let $f$ be a  symmetric polynomial in $n$ real variables of degree $2d$. Then, if the inequality $f(y)\geq 0$ holds on all points $y\in\R^n$ that do not have more than $d$ distinct components, it is valid for all $y\in\R^n$. Furthermore, the called the \emph{degree principle} applies for a general  semi-algebraic set $S$  described  by symmetric polynomials $f_1,\ldots,f_m$ such that every symmetric polynomial $f_i$ has at most degree $d$. In this case emptiness of $S$ can be certified by restricting to the points with at most $d$ distinct components. 
Both the half degree principle and the degree principle allow for a more efficient way to     check for non-negativity of a symmetric polynomial or to certify emptiness of a symmetric semi-algebraic set. For $k\in\Z$ a $k$-partition of $n$ (denoted as $\vartheta\vdash_k n$) is an ordered sequence $\vartheta:=(\vartheta_1,\ldots,\vartheta_k)$ of non-negative integers with $\vartheta_1\geq \vartheta_2\geq\ldots,\geq \vartheta_k$ and $\vartheta_1+ \vartheta_2+\ldots+ \vartheta_k=n$.  Given a symmetric polynomial  $f$ and  $\vartheta\vdash_k n$  one can define a $k$-variate polynomial 
$f^{\vartheta}\in\R[T_1,\ldots, T_k]$ via
$$f^{\vartheta}(T_1,\ldots,T_k):=f(\underbrace{T_1,\ldots,T_1}_{\vartheta_1},\underbrace{T_2,\ldots,T_2}_{\vartheta_2},\ldots,\underbrace{T_{k},\ldots,T_{k}}_{\vartheta_{k}}).$$

Using this notation the half-degree principle states  that if $f^{\vartheta}$  is non negative on $\R^k$  for all $\vartheta\vdash_k n$, then the original symmetric polynomial $f$ is non-negative. Since $\vartheta\vdash_k n$ implies that $\vartheta\in\{1,\ldots,n\}^k$,  the number of possible $k$ partitions of $n$ is  bounded by $n^k$. Hence the complexity of deciding if  a   symmetric polynomial of a fixed degree is non-negative depends only polynomially on $n$. This construction can be applied appropriately  to any semi-algebraic set $S$ defined by symmetric polynomials whose degrees are bounded by a fixed  number. This idea has  remarkable consequences, for example in  SDP- relaxations for optimization tasks defined by symmetric polynomials \cite{GC,RLJT},  or the study of topological complexity of projections of general (i.e. non-symmetric) semi-algebraic sets \cite{BS}. 
In the remainder of this paper we want to show that these remarkable statements and a slight generalization can be derived in an elementary way by properties of the power sum polynomials.  

\section{Generalizing the degree principle}

One of the main observations in the proof of the degree principle is the form of the representation of a symmetric polynomial of degree $d$ in terms of generators of the polynomial algebra of symmetric polynomials:
For integers $n,i\in\N$ define the power sum polynomials $$p_i^{(n)}:=\sum_{j=1}^n X_j^i.$$ We  will omit the superscript $n$ whenever the number of variables is clear. 
The following statement is well-known and sometimes referred to as the fundamental theorem of symmetric polynomials.
\begin{thm}\label{thm:sym}
Let  $f\in\R[X_1,\ldots,X_n]$ be symmetric, then there is a unique polynomial $g\in\R[Z_1,\ldots,Z_n]$ such that \begin{equation}\label{eq:1}f(X_1,\ldots,X_n)=g(p_1(X_1,\ldots,X_n),\ldots,p_n(X_1,\ldots,X_n)).\end{equation}
\end{thm}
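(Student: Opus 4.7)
The plan is to reduce to the classical fundamental theorem of symmetric polynomials, which asserts the analogous statement with the power sums $p_i$ replaced by the elementary symmetric polynomials $e_i := \sum_{j_1 < \cdots < j_i} X_{j_1} \cdots X_{j_i}$. I would assume that version as known and bridge from $e_i$ to $p_i$ via Newton's identities, which in characteristic zero are invertible.

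For existence, I would first apply the classical theorem to obtain $G \in \R[Z_1,\ldots,Z_n]$ with $f = G(e_1,\ldots,e_n)$. Then I would invoke Newton's identities
\[
k\, e_k \;=\; \sum_{i=1}^{k} (-1)^{i-1}\, e_{k-i}\, p_i, \qquad 1 \le k \le n,
\]
with the convention $e_0 = 1$. Since we work over $\R$ and $k \neq 0$, these recursions let me solve successively for $e_1, e_2, \ldots, e_n$ as polynomials (with rational coefficients) in $p_1,\ldots,p_n$. Substituting the resulting expressions into $G$ produces the desired $g \in \R[Z_1,\ldots,Z_n]$.

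For uniqueness, it suffices to show that $p_1,\ldots,p_n$ are algebraically independent in $\R[X_1,\ldots,X_n]$: if so, the substitution homomorphism $\R[Z_1,\ldots,Z_n] \to \R[X_1,\ldots,X_n]$ sending $Z_i \mapsto p_i$ is injective, so any two preimages of $f$ coincide. Algebraic independence I would verify via the Jacobian criterion: a direct computation gives
\[
\det\!\left(\frac{\partial p_i}{\partial X_j}\right)_{1 \le i,j \le n} \;=\; \det\bigl(i\, X_j^{i-1}\bigr) \;=\; n!\,\prod_{1 \le i < j \le n}(X_j - X_i),
\]
after factoring the scalar $i$ out of row $i$ and recognising the Vandermonde determinant. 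This polynomial is nonzero, so the differentials $dp_1,\ldots,dp_n$ are generically linearly independent over the function field, forcing algebraic independence of $p_1,\ldots,p_n$.

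The only subtlety is the appearance of the factor $k$ in Newton's identities, which is the reason the statement must be formulated over a field of characteristic zero (the paper works over $\R$, so this is automatic). Beyond that, each step is a routine consequence of either the classical elementary-symmetric form of the theorem or a one-line Vandermonde calculation, and I do not anticipate a serious obstacle.
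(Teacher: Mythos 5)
The paper states Theorem~\ref{thm:sym} without proof, calling it ``well-known and sometimes referred to as the fundamental theorem of symmetric polynomials,'' so there is no in-paper argument to compare against. Your proof is a correct and standard way to supply the omitted argument: reduce existence to the elementary-symmetric version of the fundamental theorem via Newton's identities (which are invertible in characteristic zero), and get uniqueness from algebraic independence of $p_1,\ldots,p_n$, established by the Jacobian criterion and the Vandermonde computation $\det(i\,X_j^{i-1}) = n!\prod_{i<j}(X_j-X_i)\neq 0$. Both steps are sound, and your remark that the factor $k$ in Newton's identities is what forces characteristic zero is exactly the right caveat; the paper works over $\R$, so this is fine.
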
 
Since the decomposition in 
 \eqref{eq:1} is unique, a closer inspection gives the following.
\begin{cor}\label{cor:dar}
Let $f\in\R[X_1,\ldots,X_n]$ be symmetric of degree $d\leq n$. Then setting $k=\lfloor\frac{d}{2}\rfloor$ and $d'=\min\{d,n\}$, we have \begin{eqnarray*}
f(X_1,\ldots,X_n)=g_0(p_1(X_1,\ldots,X_n),\ldots,p_k(X_1,\ldots,X_n))\\+\sum_{j=k+1}^{d'} g_{j-k}(p_1(X_1,\ldots,X_n),\ldots,p_k(X_1,\ldots,X_n))\cdot p_{j}(X_1,\ldots,X_n)
\end{eqnarray*}
\end{cor}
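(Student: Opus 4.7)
The plan is to apply Theorem \ref{thm:sym} to write $f = g(p_1,\dots,p_n)$ for a unique $g \in \R[Z_1,\dots,Z_n]$, and then to exploit the homogeneity of the power sums to pin down which monomials in $g$ can occur once one imposes $\deg f \le d$.

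First I would equip $\R[Z_1,\dots,Z_n]$ with the weighted grading in which $Z_i$ has weight $i$. Since $p_i$ is homogeneous of degree $i$ in the $X_j$, any monomial $Z_1^{a_1}\cdots Z_n^{a_n}$ evaluates at $(p_1,\dots,p_n)$ to an element that is homogeneous of $X$-degree $w(\alpha):=\sum_i i\cdot a_i$. The key observation is that the top homogeneous part of $f$ comes precisely from the monomials of $g$ of maximal weighted degree: writing $g=\sum_\alpha c_\alpha Z^\alpha$, the homogeneous component $f_w$ of $f$ of $X$-degree $w$ is exactly $\sum_{w(\alpha)=w}c_\alpha\, p^\alpha$, and by the uniqueness in Theorem \ref{thm:sym} (equivalently, the algebraic independence of $p_1,\dots,p_n$) the monomials $p^\alpha$ are linearly independent. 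Hence $\deg f\le d$ forces $c_\alpha=0$ whenever $w(\alpha)>d$.

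With $k=\lfloor d/2\rfloor$ I would then read off the structure of the surviving monomials. The condition $w(\alpha)\le d$ forces $a_i=0$ for every $i>d$. For any index $j>k$ one has $2j\ge 2(k+1)>d$, so $a_j\le 1$; and for two distinct indices $j_1,j_2>k$ one has $j_1+j_2\ge 2k+2>d$, ruling out the simultaneous occurrence of two such variables in the same monomial. Grouping monomials of $g$ according to which (if any) variable $Z_j$ with $j>k$ appears therefore yields a decomposition of the form
\begin{equation*}
g(Z_1,\ldots,Z_n) \;=\; g_0(Z_1,\ldots,Z_k) + \sum_{j=k+1}^{d} g_{j-k}(Z_1,\ldots,Z_k)\cdot Z_j,
\end{equation*}
and substituting $Z_i=p_i$ together with $d'=\min\{d,n\}=d$ (using the hypothesis $d\le n$) gives the corollary.

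The step that requires the most care is the degree-preservation claim in the second paragraph: that the $X$-degree of $g(p_1,\dots,p_n)$ equals the maximum weighted degree of a monomial of $g$. This rests exactly on the algebraic independence of the $p_i$ guaranteed by Theorem \ref{thm:sym}; once this is in place, the remainder of the proof is purely combinatorial bookkeeping on the weighted exponents $(a_1,\dots,a_n)$.
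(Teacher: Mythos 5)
Your proof is correct and fleshes out exactly the argument the paper gestures at with ``since the decomposition in \eqref{eq:1} is unique, a closer inspection gives the following'': you use the weighted grading on $\R[Z_1,\ldots,Z_n]$, the algebraic independence of $p_1,\ldots,p_n$ (equivalently, uniqueness in Theorem~\ref{thm:sym}), and the degree bound $d$ to rule out any monomial of $g$ containing $Z_j^2$ or $Z_{j_1}Z_{j_2}$ with $j,j_1,j_2>k$. This is the intended proof; the paper simply omits the bookkeeping.
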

The following definition  generalizes this property to symmetric polynomials that  are representable using few power sums.
\begin{definition}\label{def:one}
Let $J\subset \N$ be a set of cardinality $d$. Then a symmetric polynomial  $f$ is called $J-$sparse, if it admits a representation in terms of the power sums associated to $J$, i.e.,
$$ f= g(p_{j_1},\ldots,p_{j_d}),$$
where $j_l\in J$ and $g\in \R[Z_1,\ldots,Z_d]$.
\end{definition}
Note that  every symmetric polynomial of degree $d$ is $\{1,\ldots, d\}$ - sparse. 
The following observation   provides a simple method to verify if a given polynomial is $J$ -sparse. We denote by $V\in\R[X_1,\ldots,X_n]^{n\times n}$  the Vandermonde matrix, i.e., $V=(X_i^{j-1})_{i,j=1}^n$. Further, let $V^{-1}\in\R(X_1,\ldots,X_N)^{n\times n}$ be its inverse. Notice that this inverse matrix can be calculated via 
$$V^{-1}_{i,j}:=\left(\frac{(-1)^{n+i-1}e_{n-i}(X_1,\ldots,\hat{X}_j,\ldots X_n)}{\prod_{l=1,l\neq j}^n(X_j-X_l)}\right),$$ where $e_j(X):=\sum_{J\subset \{1,\ldots,n\} |J|=j}\prod_{i\in J} X_i$ is the $j-$th elementary symmetric polynomial and $\hat{X_j}$ denotes leaving out this variable.

\begin{prop}
Let $f\in\R[X]$ be a symmetric polynomial denote by $\nabla f\in{\R[X_1,\ldots,X_n]}^n$ its gradient vector and define $h(X)\in\R[X_1,\ldots,X_n]^{n}$ via $h(X):=(M\cdot \nabla f)^t$. If a set  $J\subset\{1,\ldots,n\}$ satisfies $h(X)_j= 0$ for all $j\not\in J$,   then  $f$ is $J$-sparse.
\end{prop}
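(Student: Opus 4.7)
My plan is to invoke Theorem~\ref{thm:sym} to write $f=g(p_1,\ldots,p_n)$ uniquely, and then read the vanishing of individual partial derivatives of $g$ directly off the hypothesis on $h$. I interpret the matrix $M$ in the statement as the inverse Vandermonde matrix $V^{-1}$ introduced just before the proposition; the key observation is that $V^{-1}\nabla f$, a priori only a vector of rational functions, will turn out to be a vector of polynomials whose $j$-th entry encodes $(\partial g/\partial Z_j)(p(X))$ up to a nonzero integer scalar.

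The central computation is a one-line chain rule. Since $\partial p_j/\partial X_i = j X_i^{j-1} = j\cdot V_{i,j}$, the Jacobian of $p=(p_1,\ldots,p_n)$ with respect to $(X_1,\ldots,X_n)$ factors as $V\cdot D$ with $D=\mathrm{diag}(1,2,\ldots,n)$. Writing $f=g(p_1,\ldots,p_n)$ and applying the chain rule then gives
\[
\nabla f \;=\; V\cdot D\cdot (\nabla_Z g)\bigl(p_1(X),\ldots,p_n(X)\bigr),
\]
and multiplying on the left by $V^{-1}$ yields the identity $h(X)^t = D\cdot (\nabla_Z g)(p(X))$. In particular the $j$-th coordinate equals $j\cdot(\partial g/\partial Z_j)(p(X))$, which is visibly a polynomial in $X$, so the passage to $V^{-1}$ creates no denominators here.

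To finish, I would appeal to the algebraic independence of $p_1,\ldots,p_n$ over $\R$: the uniqueness clause in Theorem~\ref{thm:sym} says precisely that the substitution homomorphism $\R[Z_1,\ldots,Z_n]\to\R[X_1,\ldots,X_n]$ sending $Z_j\mapsto p_j$ is injective. The hypothesis $h(X)_j=0$ for $j\notin J$, together with the previous paragraph, forces $(\partial g/\partial Z_j)(p(X))=0$ in $\R[X_1,\ldots,X_n]$, and hence $\partial g/\partial Z_j = 0$ in $\R[Z_1,\ldots,Z_n]$. Therefore $g$ is independent of $Z_j$ for every $j\notin J$; writing $J=\{j_1,\ldots,j_d\}$, we obtain $\tilde g\in\R[Z_1,\ldots,Z_d]$ with $g(Z_1,\ldots,Z_n)=\tilde g(Z_{j_1},\ldots,Z_{j_d})$ and thus $f=\tilde g(p_{j_1},\ldots,p_{j_d})$, which is exactly the $J$-sparsity claim.

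The only obstacle I anticipate is notational rather than mathematical: making sure that $M=V^{-1}$ really matches the author's intent, and that the transpose convention in $h(X):=(M\nabla f)^t$ is tracked consistently through the chain rule identity. Once those bookkeeping issues are settled, the whole argument reduces to the chain rule together with the well-known algebraic independence of the power sums, so I do not expect any genuinely delicate step.
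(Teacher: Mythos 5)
Your proof is correct and follows essentially the same route as the paper: write $f=g(p_1,\ldots,p_n)$, apply the chain rule to factor the Jacobian as $V\cdot D$, invert to recover $(\nabla_Z g)(p(X))$ from $V^{-1}\nabla f$, and conclude that $\partial g/\partial Z_j=0$ for $j\notin J$. You are a bit more careful than the paper in two spots — making explicit the appeal to algebraic independence of the power sums (so that vanishing of $(\partial g/\partial Z_j)(p(X))$ implies vanishing of $\partial g/\partial Z_j$ as a polynomial), and noting that $V^{-1}\nabla f$ is actually polynomial — and you also get the scalar right as $D^{-1}V^{-1}\nabla f$ where the paper has a small slip writing $D$ instead of $D^{-1}$.
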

\begin{proof}
Let $g\in\R[Z_1,\ldots,Z_n]$ be the unique polynomial such that $f=g(p_1,\ldots,p_n)$. It follows by the chain rule that $$\frac{\partial f}{\partial X_i}= \sum_{l=1}^n \frac{\partial g}{\partial Z_l}(p_1(X),\ldots,p_n(X))\cdot \frac{\partial p_l}{\partial X_i}=\sum_{i=1}^{n} l X_i^{l-1}\frac{\partial g}{\partial Z_l}(p_1(X),\ldots,p_n(X)).$$
Now let $D\in\R^{n\times n}$ be the diagonal matrix with diagonal entries $(1,\ldots,n)$. Then this identity can be written as
$$\nabla f= V\cdot D\cdot \nabla g,$$ and hence $\nabla g=D\cdot V^{-1}\nabla f$. Therefore, $g$ does not depend on $p_j$ if and only if $(\nabla g)_j=0$ which in turn is the case if and only if $(V^{-1}\nabla f)_j=0$, proving the statement.
\end{proof}

\begin{definition}
For $x:=(x_1,\ldots,x_n)\in\R^n$, let $\pi(x):=|\{x_1,\ldots,x_n\}|$ denote the number of distinct coordinates of $x$ and 
$\pi^{+}(x):=|\{x_1,\ldots,x_n\}\cap\R_{> 0}|$ denote the number of distinct positive coordinates of $x$. Further, for $k\in \N$ define the sets
$$A_{k}:=\{ x\in\R^n \,:\, \pi(x)\leq k\} \text{ and } A_{k}^+:=\{ x\in\R^n_{\geq 0} \,:\, \pi^{+}(x)\leq k\}.$$
\end{definition}
The following theorem is a generalization of the degree principle.
\begin{thm}\label{thm:main}
Let   $J:=\{j_1,\ldots,j_d\}\subset\N$ with $j_1< j_2\ldots< j_d$  and let $f_1,\ldots,f_m\in \R[X]$ be symmetric   $J-$sparse polynomials. 
Consider a non-empty semi-algebraic set $S(f_1,\ldots,f_m)\subset\R^n$. Then the following holds:
\begin{enumerate}
\item If all of $j_1,\ldots,j_d$ are even, then $S$ contains a non-negative point with at most $d$ distinct non-zero coordinates, i.e., $S\cap A_d^+\neq \emptyset$.
\item If at least one of $j_1,\ldots,j_d$ is odd, then $S$ contains a point with at most $\ell:=\min\{j_d,2d+1\}$ many distinct coordinates, i.e., $S\cap A_\ell\neq \emptyset$.
\end{enumerate}
\end{thm}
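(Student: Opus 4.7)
The proof reduces the problem to a statement about level sets of power sums. Since each $f_i$ is $J$-sparse, $f_i(x)$ depends only on $(p_{j_1}(x),\ldots,p_{j_d}(x))$; hence for any $x^*\in S$ the level set
$$\mathcal{T}(x^*):=\{x\in\R^n:p_{j_l}(x)=p_{j_l}(x^*)\text{ for all }l=1,\ldots,d\}$$
is entirely contained in $S$. It therefore suffices, given any $x^*\in S$, to produce a point of $\mathcal{T}(x^*)$ with sufficiently few distinct (positive) coordinates.

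In Case (1), since every $j_l$ is even, $p_{j_l}(x)=p_{j_l}(|x|)$, so replacing $x^*$ by its entrywise absolute value reduces to $x^*\in\R^n_{\geq 0}$. The set $C:=\mathcal{T}(x^*)\cap\R^n_{\geq 0}$ is compact (the constraint $p_{j_d}(x)=p_{j_d}(x^*)$ with $j_d$ even bounds $\|x\|$), and because each $A_k^+$ is closed, $\pi^+$ is lower semicontinuous and attains its minimum $s$ on $C$ at some $x$ with distinct positive values $a_1<\cdots<a_s$ and multiplicities $m_1,\ldots,m_s$. I claim $s\leq d$ by contradiction: if $s>d$, perturb the values $a_i$ while keeping the $m_i$ fixed. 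The Jacobian of $a\mapsto\bigl(\sum_i m_i a_i^{j_l}\bigr)_l$ factors as $\mathrm{diag}(j_l)\cdot V_J(a)\cdot\mathrm{diag}(m_i)$, where $V_J(a):=(a_i^{j_l-1})_{l,i}$ is a generalized Vandermonde matrix on distinct positive reals with distinct nonnegative integer exponents, hence of full row rank $d<s$. The fiber through $a$ is therefore locally a smooth manifold of dimension $s-d>0$; any curve inside it remains in the compact set $C$ and so eventually leaves the open chamber $\{0<a_1<\cdots<a_s\}$, either by coalescing two coordinates or by driving $a_1$ to $0$. Either way one produces a point of $C$ with strictly smaller $\pi^+$, contradicting minimality.

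Case (2) follows the same perturbation blueprint but requires a rank estimate valid for possibly negative entries. A nontrivial row dependency of $V_J(a)$ on distinct reals $a_1,\ldots,a_s$ yields a nonzero sparse polynomial $q(t)=\sum_l c_l t^{j_l-1}$ with at most $d$ nonzero terms vanishing on all $s$ values. By Descartes' rule of signs, applied separately to $q(t)$ (positive roots) and $q(-t)$ (negative roots), $q$ has at most $(d-1)+(d-1)+1=2d-1$ distinct real roots, so $V_J(a)$ has full row rank $d$ whenever $s\geq 2d$. Feeding this into the perturbation argument forces $\pi(x)\leq 2d+1$. The complementary bound $\pi(x)\leq j_d$ is obtained from an explicit construction via Newton's identities: the prescribed values $p_{j_l}(x^*)$ constrain only $d$ of the first $j_d$ elementary symmetric functions, leaving $j_d-d$ degrees of freedom that can be adjusted so that the associated degree-$j_d$ monic polynomial is real-rooted, after which one adjoins $n-j_d$ zeros to produce a point of $\mathcal{T}(x^*)\cap A_{j_d}$.

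The main obstacle is Case (2) when $j_d$ is odd, for then $\mathcal{T}(x^*)$ need not be compact and a perturbation along the fiber may escape to infinity rather than produce a coalescence of coordinates. I plan to deal with this by first minimizing $\pi$ over $\mathcal{T}(x^*)\cap\overline{B_R}$ for large radius $R$ and then passing to the limit $R\to\infty$ via a semi-algebraic curve-selection argument; the slack between the sharp sparse-Descartes count $2d-1$ and the stated $2d+1$ conveniently absorbs the limit-boundary effects of this compactification.
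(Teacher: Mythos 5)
Your reduction to the power-sum level set $\mathcal{T}(x^*)$ (the paper's $H(a_1,\ldots,a_d)$) is exactly the paper's first step, but from there your route diverges and has genuine gaps.

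The central gap in Case (1) is the claim that ``any curve inside the fiber remains in the compact set $C$ and so eventually leaves the open chamber.'' This is not a valid deduction: a positive-dimensional fiber could a priori be a \emph{closed} submanifold (diffeomorphic, say, to a circle or sphere) lying entirely inside the open chamber $\{0<a_1<\cdots<a_s\}$, and then no curve in it ever reaches a wall. To exclude this possibility one still needs a Lagrange-multiplier argument: for instance, the minimum of $a_1$ over such a compact component would yield multipliers $\mu_l$ making the at-most-$d$-term polynomial $q(t)=\sum_l \mu_l j_l t^{j_l-1}$ vanish at the $s-1\ge d$ distinct positive values $a_2,\ldots,a_s$ while $q(a_1)\neq 0$, contradicting Descartes --- but this is precisely the step you were trying to avoid, and without it the rank computation alone does not force the fiber to touch the boundary. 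The paper does this directly: it optimizes the auxiliary function $p_{j_d+2}$ over the compact level set, reads off from first-order stationarity that every coordinate of the extremizer is a root of the sparse polynomial $\Lambda(T)=(j_d+2)T^{j_d+1}-\sum_l j_l\alpha_l T^{j_l-1}$ with at most $d+1$ nonzero terms, and applies Descartes to get at most $d$ distinct positive roots. No dimension count, no curve.

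In Case (2) the same gap recurs in the perturbation argument, and two further pieces are left unproved. The $j_d$-bound via Newton's identities asserts that the $j_d-d$ remaining elementary symmetric functions ``can be adjusted so that the associated degree-$j_d$ monic polynomial is real-rooted''; you give no reason such a real-rooted choice exists in the constrained family, and this is not a routine fact. The paper obtains both bounds $\min\{j_d,2d+1\}$ from a single Descartes count, because the Lagrange polynomial $\Lambda'(T)=\lambda_0(j_d+1)T^{j_d}-\sum_l\lambda_l j_l T^{j_l-1}$ simultaneously has degree at most $j_d$ and at most $d+1$ terms. Finally, in the all-odd subcase your $R\to\infty$ truncation is problematic: the minimizer of $\pi$ over $\mathcal{T}(x^*)\cap\overline{B_R}$ may lie on $\partial B_R$, where an extra multiplier for $\|x\|^2$ enters and alters the sparsity and root count, and the ``semi-algebraic curve-selection'' step to control the limit is not carried out. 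The paper avoids the issue entirely by minimizing the coercive function $p_2$ over the closed set $H$; the minimum exists without any compactification, and the resulting stationarity polynomial again has the right sparsity.

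In short: right reduction, but the dimension/curve argument must be replaced by the paper's Lagrange-plus-Descartes step applied to a well-chosen objective ($p_{j_d+2}$, $p_{j_d+1}$, or $p_2$), which is what actually forces few distinct coordinates.
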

In particular  we immediately recover the degree principle:
\begin{cor}[Degree Principle]
Let $S\subset \R^n$ be a semi-algebraic set defined  by symmetric  polynomials of degree at most $d$, then 
 $S\neq\emptyset$ if and only if $S\cap A_{k}\neq\emptyset$, where $k:=\max\{2,d\}$.
 \end{cor}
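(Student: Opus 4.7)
The plan is to obtain the Degree Principle as a direct specialization of Theorem~\ref{thm:main}(2), with a brief inspection of small-degree corner cases to justify the $\max\{2,d\}$.

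First, the ``if'' direction is immediate since $A_k\subset\R^n$. For the ``only if'' direction, fix symmetric polynomials $f_1,\ldots,f_m$ of degree at most $d$ defining $S$. By the fundamental theorem of symmetric polynomials (Theorem~\ref{thm:sym}) together with the degree bookkeeping made explicit in Corollary~\ref{cor:dar}, each $f_i$ can be written as a polynomial in $p_1,\ldots,p_d$ alone. In the language of Definition~\ref{def:one}, every $f_i$ is $J$-sparse for $J=\{1,2,\ldots,d\}$ (a common $J$ for all the $f_i$, which is what Theorem~\ref{thm:main} requires).

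Now I would apply Theorem~\ref{thm:main} to this $J$. As soon as $d\geq 1$, the element $1\in J$ is odd, so case~(2) applies and yields a point of $S$ with at most $\ell=\min\{j_d,2d+1\}=\min\{d,2d+1\}=d$ distinct coordinates. Hence $S\cap A_d\neq\emptyset$, and a fortiori $S\cap A_{\max\{2,d\}}\neq\emptyset$.

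It remains to treat the corner cases in order to match the statement $k=\max\{2,d\}$. If $d=0$, every $f_i$ is constant, so either $S=\emptyset$ (excluded by hypothesis) or $S=\R^n$, in which case any point with $\leq 2$ distinct coordinates lies in $S$. If $d=1$, then $J=\{1\}$ still contains an odd element, so Theorem~\ref{thm:main}(2) gives $S\cap A_1\neq\emptyset\subset S\cap A_2$. Thus in all cases $S\cap A_{\max\{2,d\}}\neq\emptyset$, completing the proof. There is no real obstacle here; the corollary is essentially bookkeeping once the key fact that degree-$d$ symmetric polynomials are $\{1,\ldots,d\}$-sparse is in hand, and the $\max\{2,d\}$ is only there to absorb the trivial $d\leq 1$ regime where Theorem~\ref{thm:main} already yields a sharper bound.
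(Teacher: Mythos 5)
Your proof is correct and takes exactly the approach the paper intends: specialize Theorem~\ref{thm:main}(2) to $J=\{1,\ldots,d\}$ (using Theorem~\ref{thm:sym} / Corollary~\ref{cor:dar} to see that every degree-$\le d$ symmetric polynomial is $\{1,\ldots,d\}$-sparse), note that $1$ is odd so part~(2) gives $\ell=\min\{d,2d+1\}=d$, and absorb the trivial $d\le 1$ cases into $\max\{2,d\}$. The paper states the corollary without proof (``we immediately recover the degree principle''), and your argument supplies precisely the omitted specialization.
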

 The proof will use Descartes' rule of signs, a statement on the number of positive and negative roots of a univariate polynomial.
Let $$p(T)=\sum_{i=0}^n a_i T^i$$ be a univariate polynomial with real coefficients. 
We define $\nu$ to be the number of variations in sign of the sequence of coefficients $a_0,\ldots,a_n$, i.e. the number of values of times such that the sign of the sequence $a_0,\ldots,a_n$ changes.
\begin{prop}[Descartes' rule of signs]
Let $f(T)\in\R[T]$ be a univariate polynomial. Then the number of positive roots, i.e. $t\in(0,\infty)$ with $f(t)=0$ is at most $\nu$.
\end{prop}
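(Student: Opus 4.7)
The plan is to prove the rule by induction on $\deg f$, with Rolle's theorem as the principal tool. The base case $\deg f = 0$ is trivial: a nonzero constant has no positive roots and no sign variations. For the inductive step, I would first reduce to the case $a_0 \neq 0$. If the lowest-index nonzero coefficient of $f$ is $a_k$ with $k \geq 1$, write $f(T) = T^k \tilde{f}(T)$ with $\tilde{f}(0) \neq 0$; the positive roots of $f$ and $\tilde{f}$ are identical, as are their sign-variation counts (trailing zero coefficients contribute nothing), so the inductive hypothesis applied to $\tilde{f}$ handles that case. Hence I may assume $a_0 \neq 0$.

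Next, apply the inductive hypothesis to $f'$, yielding $Z(f') \leq \nu(f')$, where $Z(\cdot)$ temporarily denotes the number of positive roots counted with multiplicity. Rolle's theorem gives $Z(f') \geq Z(f) - 1$ in the usual way: if the distinct positive roots of $f$ are $r_1 < \cdots < r_s$ with multiplicities $m_1, \ldots, m_s$, then each $r_i$ is a root of $f'$ of multiplicity $m_i - 1$ and Rolle supplies at least one additional root of $f'$ in each gap $(r_i, r_{i+1})$, totaling $\sum(m_i - 1) + (s-1) = Z(f) - 1$. For the sign variations, the nonzero coefficients of $f'$ carry exactly the same sign pattern as the nonzero coefficients of $f$ with index $\geq 1$ (the factors $i > 0$ do not alter signs), so $\nu(f) - \nu(f') \in \{0,1\}$, the value being $1$ precisely when $a_0$ differs in sign from the next nonzero coefficient $a_{i_2}$.

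If $a_0$ and $a_{i_2}$ have opposite signs, everything chains directly: $Z(f) \leq Z(f') + 1 \leq \nu(f') + 1 = \nu(f)$. The delicate case is when they share a sign, in which case $\nu(f) = \nu(f')$ and the naive bound overshoots by one. The remedy is to exhibit one additional root of $f'$ in the interval $(0, r_1)$, to the left of the smallest positive root of $f$. Assuming WLOG $a_0 > 0$ (hence $a_{i_2} > 0$), both $f$ and $f'$ are positive for sufficiently small $T > 0$ since their lowest-order terms are $a_0$ and $i_2 a_{i_2} T^{i_2 - 1}$ respectively. The mean value theorem applied to $f$ on $[0, r_1]$ produces some $\xi \in (0, r_1)$ with $f'(\xi) = -a_0/r_1 < 0$, and then the intermediate value theorem gives a root of $f'$ in $(0, \xi) \subset (0, r_1)$. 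This root is disjoint from all those accounted for in the Rolle count (which lie in $[r_1, r_s]$), so $Z(f') \geq Z(f)$, and consequently $Z(f) \leq \nu(f') = \nu(f)$.

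The main obstacle is precisely this tightness in the same-sign case: the plain Rolle estimate is short by exactly one, and the repair hinges on turning the sign-coincidence condition $\nu(f) = \nu(f')$ into the analytic statement that $f$ and $f'$ are simultaneously positive near the origin, thereby producing the extra critical point of $f$ below its smallest positive root.
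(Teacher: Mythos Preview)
Your argument is correct and is in fact the classical inductive proof of Descartes' rule via Rolle's theorem; the handling of the delicate ``same sign'' case by locating an extra critical point in $(0,r_1)$ is exactly the standard repair, and you carry it out cleanly. One tiny remark: you prove the stronger statement with roots counted by multiplicity, which of course implies the form stated in the paper.

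As for comparison with the paper: there is nothing to compare. The paper states Descartes' rule of signs as a known proposition and gives no proof; it is invoked only as a tool in the proof of Theorem~\ref{thm:main}. So your proposal supplies a proof where the paper simply cites the result.
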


\begin{proof}[Proof of Theorem \ref{thm:main}]
$(1)$ Assume that $j_1,\ldots,j_d$ are even. Fix $y\in S(f_1,\ldots,f_l)$, set $$a_1:=p_{j_1}(y),\ldots, a_d:=p_{j_d}(y),$$
and consider $$H(a_1,\ldots,a_d):=\{v\in\R^n\,:\,p_{j_1}(v)=a_1,\ldots,p_{j_d}(v)=a_d\}.$$ Clearly, $H(a_1,\ldots,a_d)\subset S(f_1,\ldots,f_l)$. Since all of $j_1,\ldots,j_d$ are even,   $H(a_1,\ldots,a_d)$ is compact.   Therefore, the polynomial function $p_{j_{d+2}}$ has an extreme point $\zeta\in H(a_1,\ldots,a_d)$, and since $p_{j_{d+2}}$ is an even function, we can assume that all  coordinates $\zeta_i$ of $\zeta$ are non-negative. Since $\zeta$ is an extreme point, it follows from  Lagrange's theorem that there are real $\lambda_0,\ldots,\lambda_d$, which are not all zero, such that
$$\lambda_0\nabla p_{j_{d+2}}(\zeta)=\sum_{l=1}^d \lambda_l \nabla p_{j_l}(\zeta).$$ 
Since $\frac{\partial p_i}{\partial X_j}=i\,X_j^{i-1}$, this in turn implies that each of the $\zeta_i$  is a non-negative root of the univariate polynomial 
$$\Lambda(T):=(j_{d}+2)T^{ j_{d+1}}-\sum_{l=1}^d j_l\alpha_l T^{j_l-1}.$$   This polynomial has at most $d+1$ non zero coefficients and hence by Descartes' rule  there are at most $d$ distinct positive roots. Therefore, $\zeta$ has at most $d$ distinct non-zero coordinates.

$(2)$ We start in the case, when at least one of  $j_1,\ldots ,j_d$ is even. Consider the set $H(a_1,\ldots,a_d)$, which is again compact and thus the function $p_{j_d+1}$ will have an extreme point.  With the same arguments the coordinates of the extreme points are roots of a  univariate polynomial $\Lambda'$ of degree $j_d$.    Descartes' Rule implies that $\Lambda'(T)$ can have at most $d$ distinct positive roots and at most $d$ distinct negative roots. So in total the maximal number of distinct real roots (including zero) is $\min\{j_{d},2d+1\}$.
Finally, suppose that all of $j_1,\ldots ,j_d$ are odd. Then $H(a_1,\ldots,a_d)$ can be unbounded. However, the function $p_2$ will have a minimum over $H(a_1,\ldots,a_d)$, and one can argue in the same manner.
\end{proof}
\section{Half-degree principle}
The half-degree principle, which applies in the case  of one polynomial (in)equality, an  iteven a stronger result can
be achieved. We give here an elementary proof for the half-degree principle. This idea of proof can easily be generalized to various situations where a set
is described by one polynomial (in)equality that has special representation in terms of
power sums.

\begin{thm}[Half-degree principle]\label{thm:hdp}
Let $f$ be a symmetric polynomial of degree $d$. Then
\begin{enumerate}
\item $f(y)\geq 0$ for all $y\in\R^n$ if and only if $f(\tilde{y})\geq 0$ for all $\tilde{y}\in A_{k}\neq\emptyset$, where $k=\max\{2,\lfloor \frac{d}{2} \rfloor\}$, 
\item $f(y)\geq 0$ for all $y\in\R_{\geq 0}^n$ if and only if $f(\tilde{y})\geq 0$ for all $\tilde{y}\in A_{k,k}\neq\emptyset$, where $k=\max\{\lfloor \frac{d}{2} \rfloor\}$
\item $V:=\{ y\in\R^n\,:\, f(y)=0\}\neq\emptyset$ if and only if $V\cap A_{k}\neq\emptyset$, where $k=\max\{2,\lfloor d \rfloor\}$.
\end{enumerate}
\end{thm}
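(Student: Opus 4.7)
The plan is to combine Corollary~\ref{cor:dar} with the Lagrange/Descartes scheme that drove the proof of Theorem~\ref{thm:main}. Corollary~\ref{cor:dar} gives the representation
$$f = g_0(p_1,\ldots,p_k) + \sum_{j=k+1}^{d'} g_{j-k}(p_1,\ldots,p_k)\,p_j,\qquad k=\lfloor d/2\rfloor,\ d'=\min\{d,n\},$$
so the half-degree principle boils down to the slogan: once the \emph{low} power sums $p_1,\ldots,p_k$ are fixed, $f$ becomes an affine functional of the \emph{high} power sums $p_{k+1},\ldots,p_{d'}$. This extra linearity, absent in the general degree principle, is what makes $k=\lfloor d/2\rfloor$ (rather than $d$) attainable.

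For part~(1), I would assume $f(y_0)<0$ and set $a_i:=p_i(y_0)$ for $i=1,\ldots,k$. Because $k\geq 2$, the level set
$$H(a):=\{v\in\R^n:\ p_i(v)=a_i,\ i=1,\ldots,k\}$$
is contained in the sphere $p_2=a_2$ and is therefore compact, so $f|_{H(a)}$ attains its minimum at some $\zeta$ with $f(\zeta)\leq f(y_0)<0$. At this constrained extremum, Lagrange multipliers applied to the representation above furnish reals $\mu_1,\ldots,\mu_k$ with
$$\sum_{j=k+1}^{d'} g_{j-k}(a)\,\nabla p_j(\zeta)\ =\ \sum_{l=1}^{k}\mu_l\,\nabla p_l(\zeta),$$
and reading this coordinate-wise shows that every $\zeta_i$ is a root of a single univariate polynomial $\Lambda(T)$ whose ``free'' coefficients (from the $\mu_l$) occupy degrees $0,\ldots,k-1$ and whose ``fixed'' coefficients (from the $g_{j-k}(a)$) occupy degrees $k,\ldots,d'-1$. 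Applying Descartes' rule to $\Lambda(T)$ and $\Lambda(-T)$, and exploiting that the infimum of a linear functional on the compact set $H(a)$ can be realized at an extreme point of the image $(p_{k+1},\ldots,p_{d'})(H(a))$---where extreme points correspond to vectors $\zeta$ with at most $k$ distinct coordinates---one concludes $\zeta\in A_k$. Part~(2) follows by the identical argument on $\R_{\geq 0}^n$: $H(a)\cap\R_{\geq 0}^n$ is compact for the same reason, and only the positive-root half of Descartes' rule is required. Part~(3) targets the zero set $V=\{f=0\}$: picking $y_0\in V$, I would form $H(a)$ as above and minimize an auxiliary power sum (say $p_{d+1}$) on the compact set $V\cap H(a)$; the Lagrange system now carries one additional multiplier for the constraint $f=0$, pushing the degree of the associated univariate polynomial up by one and giving the bound $d$ on the number of distinct coordinates claimed.

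The main obstacle lies squarely in the Descartes step for part~(1). A naive degree bound on $\Lambda(T)$ only yields up to $d-1$ distinct real roots---of order $2k$ rather than the desired $k$. The sharper count comes from the rigidity of the high-degree coefficients of $\Lambda$: because they are not independent parameters but are all determined by the single vector $a$ through the structure of Corollary~\ref{cor:dar}, the sign pattern of $\Lambda$ is far more constrained than a generic polynomial of the same degree. Turning this constraint into a clean bound---equivalently, proving the geometric statement that extreme points of $(p_{k+1},\ldots,p_{d'})(H(a))$ are images of points with $\leq k$ distinct coordinates, and treating the degenerate cases $d'<d$ and $g_{j-k}(a)=0$---is where the proof must be executed carefully.
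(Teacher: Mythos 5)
Your setup for part (1) — fix $p_1,\dots,p_k$, use $k\geq 2$ for compactness of $H(a)$, invoke Corollary~\ref{cor:dar} to reduce $f$ on $H(a)$ to an affine combination of the high power sums, then Lagrange — matches the paper exactly. But you correctly identify, and then do not close, the central gap: the univariate polynomial $\Lambda$ coming out of the Lagrange system has degree $d'-1\approx 2k$, so a Descartes count on $\Lambda(T)$ and $\Lambda(-T)$ only yields a bound of order $2k$, not $k$. Your proposed fix — that a linear functional on $H(a)$ attains its minimum at a preimage of an extreme point of the image $(p_{k+1},\dots,p_{d'})(H(a))$, and that such preimages have $\leq k$ distinct coordinates — is essentially a restatement of the theorem, not a proof of it, and you offer no mechanism for establishing it. The paper's actual engine is entirely different: among the minimizers choose $\tilde y$ with the \emph{maximal} number $m$ of distinct coordinates, suppose $m>k$, restrict $f$ to the $m$ genuinely free coordinates where (because $2m>d$) the representation collapses to $\tilde g_0(p_1^{(m)},\dots,p_{m-1}^{(m)})+\tilde g_1(\cdot)\,p_m^{(m)}$, and then invoke a perturbation lemma (Lemma~\ref{le:tec}, an inverse-function-theorem statement) that produces nearby points fixing $p_1,\dots,p_{m-1}$ while moving $p_m$ in either direction and avoiding all old coordinate values. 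If $\tilde g_1\neq 0$ this contradicts minimality of $f$; if $\tilde g_1=0$ it contradicts maximality of $m$. That maximality trick plus Lemma~\ref{le:tec} is the missing idea in your proposal — without it, the Descartes argument stalls at the too-weak bound you flagged.

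Two smaller divergences. For part (2) you propose running Lagrange/KKT directly on $\R_{\geq 0}^n$; the paper instead substitutes $X_i\mapsto X_i^2$ so that $f(X_1^2,\dots,X_n^2)$ is a symmetric polynomial in even power sums and part (1) applies verbatim — this sidesteps the boundary/corner issues that a direct KKT argument on the orthant would have to handle. For part (3) you propose a fresh Lagrange computation with an additional multiplier for the constraint $f=0$; the paper instead deduces (3) from (1) by a connectedness argument: both $\min f$ and $\min(-f)$ are realized on $A_k$ by (1), giving points $y_1,y_2\in A_k$ with $f(y_1)\leq 0\leq f(y_2)$, and since $A_k$ is connected a zero of $f$ lies in $A_k$. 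Your route for (3) is not obviously wrong, but it is more work and would require its own Descartes analysis.
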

Before we prove the Theorem we will state some technical properties in the following lemma. 
\begin{lemma}\label{le:tec}
Let $n>1$ and $\xi\in\R^n$ with $\pi(\xi)=n$. Then there exists $\delta>0$ such that for every $0<\varepsilon<\delta$ the ball $B_\varepsilon(\xi)$ around $\xi$ with radius $\varepsilon$ contains points $\zeta$ and $\nu$ with the following properties:
\begin{enumerate}
\item $p_{i}(\xi)=p_i(\zeta)=p_i(\nu)$ for all $i\in\{1,\ldots,n-1\}$,
\item $p_{n}(\zeta)<p_n(\xi)<p_n(\nu)$,
\item $\xi_i\not\in \{\zeta_1,\ldots, \zeta_n\}$ and $\xi_i\not\in \{\nu_1,\ldots, \nu_n\}$ for all $i\in\{1,\ldots,n\}$.
\end{enumerate}
\end{lemma}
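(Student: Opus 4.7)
The plan is to apply the inverse function theorem to the power-sum map $\Phi := (p_1, \ldots, p_n) : \R^n \to \R^n$ at the point $\xi$. Since $\pi(\xi) = n$, all coordinates of $\xi$ are distinct, hence the Vandermonde matrix $V(\xi)$ is invertible. Because $\frac{\partial p_k}{\partial X_i} = k\, \xi_i^{k-1}$, the Jacobian of $\Phi$ at $\xi$ equals $D \cdot V(\xi)^T$ with $D = \operatorname{diag}(1, \ldots, n)$, which is therefore invertible. The inverse function theorem produces an open neighborhood $U$ of $\xi$ on which $\Phi$ restricts to a diffeomorphism onto a neighborhood of $\Phi(\xi)$.

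Next I would define the smooth curve $\gamma : (-\delta_0, \delta_0) \to U$ by
$$\gamma(t) := \Phi^{-1}\bigl(p_1(\xi), \ldots, p_{n-1}(\xi),\, p_n(\xi) + t\bigr).$$
By construction $p_i(\gamma(t)) = p_i(\xi)$ for $i = 1, \ldots, n-1$ and $p_n(\gamma(t)) = p_n(\xi) + t$. Setting $\nu := \gamma(t_0)$ and $\zeta := \gamma(-t_0)$ for a sufficiently small $t_0 > 0$ thus yields properties (1) and (2) at once; shrinking $t_0$ further one can also enforce $\gamma(\pm t_0) \in B_\varepsilon(\xi)$.

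The delicate point is property (3), and this is where I expect the real work. One computes $\dot{\gamma}(0) = (D \cdot V(\xi)^T)^{-1} e_n$ explicitly: using the formula for $V^{-1}$ quoted in the paper with row index $i = n$, the $i$-th coordinate of $\dot{\gamma}(0)$ turns out to be $-\bigl(n \prod_{l \neq i}(\xi_i - \xi_l)\bigr)^{-1}$, which is nonzero for every $i$ precisely because the $\xi_l$ are pairwise distinct. Hence for small $t \neq 0$ each coordinate $\gamma(t)_i$ is displaced from $\xi_i$ by an amount of order $|t|$, while still remaining much closer to $\xi_i$ than to any other $\xi_j$ (whose distances from $\xi_i$ are bounded below by the positive quantity $\min_{j\neq i}|\xi_i-\xi_j|$). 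Combining these two observations gives $\gamma(t)_i \notin \{\xi_1, \ldots, \xi_n\}$ for $t$ small, which is property (3).

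The only genuine obstacle is the last computation, namely checking that no coordinate of $\dot{\gamma}(0)$ vanishes; once that is in hand, all three properties follow from the inverse function theorem together with continuity, and choosing $t_0 > 0$ small enough to fit the curve inside $B_\varepsilon(\xi)$ produces the desired $\zeta$ and $\nu$.
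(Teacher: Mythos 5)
Your proposal is correct, and parts (1)--(2) follow exactly the paper's route (inverse function theorem applied to the power-sum map, which is a local diffeomorphism because the Jacobian $D\cdot V(\xi)^T$ is invertible when the $\xi_i$ are distinct). Where you diverge is part (3). The paper's argument is purely algebraic: since $e_1,\ldots,e_{n-1}$ are polynomials in $p_1,\ldots,p_{n-1}$, the monic polynomials $f_\xi,f_\zeta,f_\nu$ having the coordinates of $\xi,\zeta,\nu$ as roots agree in every coefficient except the constant term, and property (2) forces those constant terms to differ; hence $f_\zeta(\xi_i)=f_\xi(\xi_i)+c=c\neq 0$, so no $\xi_i$ can be a coordinate of $\zeta$ (likewise for $\nu$). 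You instead parametrize the level set $\{p_1=\cdots=p_{n-1}=\mathrm{const}\}$ by a curve $\gamma$ through $\xi$, compute $\dot\gamma(0)=\tfrac1n(V^{-1})^T e_n$ explicitly from the quoted Vandermonde inverse, and note that every component $-\bigl(n\prod_{l\neq i}(\xi_i-\xi_l)\bigr)^{-1}$ is nonzero; then each coordinate actually moves at order $|t|$ while staying far from the other $\xi_j$, which gives (3). Both arguments are sound. The paper's version is slicker and avoids the explicit matrix inverse, and it applies verbatim to \emph{any} points $\zeta,\nu$ satisfying (1) and (2) rather than only those lying on the chosen curve; yours is more quantitative, giving the actual speed at which each coordinate leaves $\xi_i$, at the price of invoking the explicit formula for $V^{-1}$ (which the paper quotes but you must trust or rederive).
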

\begin{proof}
For $(1)$ and $(2)$ it suffices  that the Jacobian of the map $\Pi:\R^n\rightarrow \R^n$, with $x\mapsto (p_1(x),\ldots,p_n(x))$ has full rank $n$ at $\xi$. The properties  $(1)$ and $(2)$  then are  consequences of the inverse function theorem.
For $(3)$ we consider the elementary symmetric functions $e_j(X)$ and define $b_j:=e_j(\xi)$. Then the coordinates of $\xi$ are exactly the $n$ roots of the polynomial $$f_\xi(T):=T^n+\,\sum_{i=1}^n (-1)^{i}b_i\,T^{n-1}.$$  Similarly, we  define polynomials $f_\zeta,f_\nu\in\R[T]$.  Since $e_1,\ldots, e_{n-1}$ are polynomials in $p_1,\ldots, p_{n-1}$ we have by  $(1)$ that these three univariate polynomials only differ at by a constant scalar. Thus it follows that 
$f_{\zeta}(\xi_i)=f_{\xi}(\xi_i)-(f_{\xi}(0)-f_\zeta(0))\neq 0$ and $f_{\nu}(\xi_i)=f_{\xi}(\xi_i)-(f_{\xi}(0)-f_\nu(0))\neq 0$ for all $i\in\{1,\ldots,n\}$,  proving $(3)$.
\end{proof}

\begin{proof}[Proof of Theorem \ref{thm:hdp}]

$(1):$  Let $k:=\max\{2,\lfloor \frac{d}{2} \rfloor\}$. We will show that for  $y\in\R^n$ there is $\tilde{y}\in A_{k}$ such that $f(\tilde{y})\leq f(y)$. 
Let $y\in\R^n$, define $a_1:=p_1(y),\ldots, a_k:=p_k(y)$, and consider $$H(a_1,\ldots,a_k):=\{\xi\in \R^n\,:\, p_1(\xi)=a_1,\ldots p_k(\xi)=a_k\}.$$ 
We now examine the the optimization problem
$$\min_{\xi\in H(a_1,\ldots,a_k)}f(\xi).$$ Let $M_f(a_1,\ldots,a_k)$ denote the set of minimizers. Since $k\geq 2$ the set $H(a_1,\ldots,a_k)$ is compact and thus $M_f(a_1,\ldots,a_k)\neq\emptyset$. 
We will show that  $M_f(a_1,\ldots,a_k)\cap A_k\neq \emptyset$. First  remark that by Corollary \ref{cor:dar} we have
$$f=g_0(p_1,\ldots,p_k)+\sum_{j=k+1}^{d'} g_{j-k}(p_1,\ldots,p_k)\cdot p_{j},$$ where $d'=\min\{d,n\}$. 
Therefore,  setting $\ell_j:=g_{j-k}(p_1(y),\ldots,p_k(y))$ we have that over $H(a_1,\ldots,a_k)$ the function $f$ is equivalent to a linear combination of power sums defined as $$\phi:=\sum_{i=k+1}^{d'}\ell_i p_i.$$
For every $\tilde{y}\in M(a_1,\ldots,a_k)$ the first order Lagrange condition has to hold, i.e., there are $\lambda_0,\ldots,\lambda_k\in\R$ which are not all zero, such that  every coordinate $\tilde{y}_j$ of $\tilde{y}$ is a root of  the equation 
\begin{equation}\label{eq:Lagrange}\lambda_0\left(\sum_{i=k+1}^{d'}i\,\ell_i \tilde{y}_j^{i-1} \right)\,+\,\sum_{i=1}^{k} i\,\lambda_i\tilde{y}_j^{i-1}  =0,\end{equation}
and it follows that $\pi(\tilde{y})\leq d'-1$. 
In the case that $\lambda_0\neq 0$ but $\lambda_i=0$ for all $i\neq 0$ this is clearly only possible if $\tilde{y}\in A_k$ or all $\ell_i=0$. Since in the second case this in turn implies that $f$ is constant on $H(a_1,\ldots,a_k)$, i.e. $M_f(a_1,\ldots, a_k)= H(a_1,\ldots,a_k)$ and further by Theorem \ref{thm:main} we have  $H(a_1,\ldots,a_k)\cap A_k\neq \emptyset$, we can conclude that $M_f(a_1,\ldots,a_k)\cap A_k\neq \emptyset$. 

In the case that at least one $\lambda_i\neq 0$ for $i\in\{1,\ldots,k\}$  choose $\tilde{y}\in M_f(a_1,\ldots,a_k)$ in such away that  $m:=\pi(\tilde{y})<n$ is maximal with respect to all points in $M_f(a_1,\ldots,a_k)$. Suppose that $\tilde{y}\notin A_k$, i.e., that $m>k$ and without loss of generality  assume that the first $m$ coordinates of $\tilde{y}$ are pairwise distinct. Now, consider  $f$ locally only as a function of these first $m$ coordinates. This restriction is given by the polynomial  $\tilde{f}(X_1,\ldots,X_m):=f(X_1,\ldots,X_m,\tilde{y}_{m+1},\ldots,\tilde{y}_n)$. 
Denoting $S_m\subset S_n$ the subgroup permuting only the first $m$ coordinates, 
 clearly $\tilde{f}$ is $S_m$ invariant and since $2m>d$ there is a representation of the from
\begin{equation}\label{eq:tilde}\tilde{f}=\tilde{g}_0(p_1^{(m)},\ldots,p_{m-1}^{(m)})+\tilde{g}_1(p_1^{(m)},\ldots,p_{m-1}^{(m)})\,p_m^{(m)}.\end{equation}
Lemma \ref{le:tec} now yields that in in every small enough (m-dimensional) ball around $(\tilde{y}_1,\ldots,\tilde{y}_m)$  there exist  points $\zeta\in\R^m$ and $\nu\in\R^m$  for which all but the last power sum agree with the evaluation on $(\tilde{y}_1,\ldots,\tilde{y}_m)$. Suppose that $\tilde{g}_1(p_1^{(m)}(\tilde{y}_1,\ldots,\tilde{y}_m),\ldots,p_{m-1}^{m}(\tilde{y}_1,\ldots,\tilde{y}_m))\neq 0$. Then it follows from the representation in \eqref{eq:tilde} it that $\tilde{f}(\zeta_1,\ldots,\zeta_m)<\tilde{f}(\tilde{y}_1,\ldots,\tilde{y}_m)$ or $\tilde{f}(\nu_1,\ldots,\nu_m)<\tilde{f}(\tilde{y}_1,\ldots,\tilde{y}_m)$, which clearly contradicts $\tilde{y}\in M(a_1,\dots,a_k)$.

Finally, suppose that $\tilde{g}_1(p_1^{(m)}(\tilde{y}_1,\ldots,\tilde{y}_m),\ldots,p_{m-1}^{m}(\tilde{y}_1,\ldots,\tilde{y}_m))= 0$. In this case it follows that $(\zeta_1,\ldots,\zeta_m,\tilde{y}_{m+1},\ldots,\tilde{y}_n) \in M(a_1,\ldots,a_k)$. However  by $(3)$ in Lemma \ref{le:tec} we can infer that $|\{\zeta_1,\ldots,\zeta_m,\tilde{y}_{m+1},\ldots,\tilde{y}_n)\}|>m$, which  contradicts the choice of $\tilde{y}$. Therefore we can conclude that $M(a_1\ldots,a_k)\subseteq A_k$.
  
$(2):$ Just observe that the polynomial $f$ is copositive if $\tilde{f}:=f(X_1^2,\ldots,X_n^2)$ non-negative. Then observing that $\tilde{f}$  has a representation with only even power sums, the arguments follow the same ideas.
 
$(3):$ Suppose $V\neq \emptyset$. Then $\min_{x\in\R^n} f(x)\leq 0$ and $\min_{x\in\R^n} -f(x)\geq 0$. Therefore it follows from $(1)$ that there is $y_1, y_2\in A_{k}$ such that $f(y_1)\leq 0$ and $f(y_2)\geq 0$. Since $A_k$ is connected the statement follows. \end{proof}

{\bf Open question:}

The proof presented in this article used properties of the power sum polynomials. In particular the notion of sparsity in Definition \ref{def:one} depends on this particular choice of generators for the ring of symmetric polynomials and  would be interesting to study if the results in Theorem  \ref{thm:main} are dependent on this choice.

{\it Acknowledgment:} 
The author thanks Andrew Arnold, Alexander Kova\v{c}ec, and an anonymous referee for many valuable comments on an earlier version that helped to improve the presentation of the results.

\end{document}